\newtheorem{theorem}{Theorem}[section]
\newtheorem{corollary}[theorem]{Corollary}
\newtheorem{lemma}[theorem]{Lemma}
\newtheorem{proposition}[theorem]{Proposition}
\newtheorem{example}[theorem]{Example}
\newtheorem{question}[theorem]{Question}
\newtheorem{problem}[theorem]{Problem}
\begin{document}

\title{On questions which are connected with Talagrand problem}

\author{V.V.Mykhaylyuk}
\address{Department of Mathematics\\
Chernivtsi National University\\ str. Kotsjubyn'skogo 2,
Chernivtsi, 58012 Ukraine}
\email{vmykhaylyuk@ukr.net}

\subjclass[2000]{Primary 54C30, 54D30; secondary 54D35, 54E52, 54C08, 54D80}

\commby{Ronald A. Fintushel}


\keywords{separately continuous functions, Baire space, compact spaces, Stone-Cech compactification, Namioka property, topological games, ultrafilter,  quasicontinuity}

\begin{abstract}
We prove the following results.

1. If $X$ is a $\alpha$-favourable space, $Y$ is a regular space, in which every separable closed set is compact, and $f:X\times
Y\to\mathbb R$ is a separately continuous everywhere jointly discontinuous function, then there exists a subspace $Y_0\subseteq
Y$ which is homeomorphic to $\beta\mathbb N$.

2. There exist a $\alpha$-favourable space $X$, a dense in $\beta\mathbb N\setminus\mathbb N$ countably compact space $Y$ and
a separately continuous everywhere jointly discontinuous function $f:X\times Y\to\mathbb R$.

Besides, it was obtained some conditions equivalent to the fact that the space $C_p(\beta\mathbb N\setminus\mathbb N,\{0,1\})$ of
all continuous functions $x:\beta\mathbb N\setminus\mathbb N\to\{0,1\}$ with the topology of point-wise convergence is a
Baire space.
\end{abstract}

\maketitle
\section{Introduction}

Investigation of joint continuity points set of separately continuous functions of two variables was started by R.~Baire in [1]. It was continued in papers of many mathematicians (H.~Hahn, W.~Serpinski, V.~Moran, I.~Namioka, M.~Talagrand, W.~Rudin, V.~Maslyuchenko and other; see, for example, [2] and the literature given there). I.~Namioka shows in [3] that for every strongly countably complete space $X$, compact space $Y$  and separately continuous function $f:X\times Y\to\mathbb R$ there exists a dense in $X$ $G_{\delta}$-set $A\subseteq X$ such that $f$ is jointly continuous at every point of set $A\times Y$. This result intensified the investigation of separately continuous functions defined on the product of Beaire and compact spaces. In particular, it was constructed in [4] an example of $\alpha$-favorable space $X$,
compact space $Y$ and separately continuous function $f:X\times Y\to\mathbb R$ such that the projection on $X$ of the set $D(f)$ of discontinuity points set of $f$ coincides with $X$. In this connection the following question was formulated in [4, Problem 3].

\begin{problem}\label{q:1.1}
Let $X$ be a Baire space, $Y$ be a compact space and $f:X\times Y\to\mathbb R$ be a separately continuous function. Is the function $f$ continuous at least at one point?
\end{problem}

It was shown in [5] that this question has the negative answer if the compactness of $Y$ to replace by $\tau$-compactness, where $\tau$ is an arbitrary infinite cardinal (a topological space $X$ is called {\it $\tau$-compact}, if every open cover of $X$ with the cardinality $\leq \tau$ has a finite subcover).

Note that for a completely regular space $Y$ and the space $X=C_p(Y,[0,1])$ of all continuous functions $x:Y\to[0,1]$ with the topology of pointwise convergence, or for a Hausdorff space $Y$ with a open-closed base and the space $X=C_p(Y,\{0,1\})$ of all continuous functions $x:Y\to\{0,1\}$ with the topology of pointwise convergence the separately continuous function $f:X\times Y\to\mathbb R$, $f(x,y)=x(y)$, is everywhere discontinuous. Therefore, it was naturally arises in the connection with Talagrand's Problem \ref{q:1.1} the question on investigation Baire property of spaces $C_p(Y,[0,1])$ and $C_p(Y,\{0,1\})$  for Hausdorff compact spaces $Y$.

In this paper we investigate the problem on the existence of everywhere discontinuous separately continuous function defined on the product of an $\alpha$-favorable space $X$ and a space $Y$, which satisfies a compactness-type conditions. Firstly we show that for an $\alpha$-favorable space $X$ and a regular space $Y$, in which every separable closed set is compact, the existence of an everywhere discontinuous function $f:X\times Y\to\mathbb R$, which quasicontinuous with respect to the first variable and continuous with respect to the second variable, imply the existence a subspace of $Y$ which is homeomorphic to Stone-Cech compactification $\beta\mathbb N$ of countable discrete space $\mathbb N$. Further, we construct an example od everywhere discontinuous separately continuous function defined on the product of an $\alpha$-favorable space $X$ and countably compact subspace $Y$ of space $\beta\mathbb N\setminus\mathbb N$. In the finish section we obtain some equivalent reformulations of the Baire property of the space of all continuous functions $x:\beta\mathbb N\setminus\mathbb N\to\{0,1\}$ with the topology of pointwise convergence.

\section{Everywhere discontinuous $KC$-functions}

Let $X$, $Y$, $Z$ be topological spaces and $f:X\times Y\to Z$. For every $x_0\in X$ and $y_0\in Y$ the mappings $f^{x_0}:Y\to Z$ ³ $f_{y_0}:X\to Z$ are defined by:
$$
f^{x_0}(y)=f(x_0,y)\,\,\,\,\,\,\,{\mbox
and}\,\,\,\,\,\,\,f_{y_0}(x)=f(x,y_0)
$$
for every $x\in X$ and $y\in Y$.

A mapping $f:X\to Y$ defined on a topological space $X$ and valued in a topological space $Y$ is called {\it quasicontinuous at a point $x_0\in X$}, if for every neighborhoods $U$ of $x_0$ in $X$ and $V$ of $f(x_0)$ in $Y$ there exists an open in $X$ nonempty set $U_1\subseteq U$ such that $f(U_1)\subseteq V$.
A mapping $f:X\to Y$ which is quasicontinuous at every point $x\in X$ is called {\it quasicontinuous}.

For topological spaces $X$, $Y$ and $Z$ the set of all mappings $f:X\times Y\to Z$ which is quasicontinuous with respect to the first variable and continuous with respect to the second variable we denote by $KC(X\times Y,Z)$.

\begin{lemma}\label{l:2.1} Let $X, Y, Z$ be topological spaces, $f\in KC(X\times Y, Z)$, $W_0, W_1$ open in $Z$ nonempty sets such that $\overline{f^{-1}(W_0)}=\overline{f^{-1}(W_1)}=X\times Y$. Then for every $n\in\mathbb N$, open in $X$ nonempty sets $G_1, G_2,\dots,G_n$ and reals $\theta_1, \theta_2,\dots,\theta_n\in\{0,1\}$ there exists $y_0\in Y$, open in $X$ nonempty sets $U_1, U_2,\dots,U_n$ such that $U_k\subseteq G_k$ ³ $f_{y_0}(U_k)\subseteq W_{\theta_k}$ for every $1\leq k\leq n$.
\end{lemma}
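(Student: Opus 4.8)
The plan is to prove, by induction on $n$, a slightly stronger statement carrying an extra parameter: for every nonempty open set $V\subseteq Y$, every $n\in\mathbb N$, all nonempty open $G_1,\dots,G_n\subseteq X$ and all $\theta_1,\dots,\theta_n\in\{0,1\}$, there exist $y_0\in V$ and nonempty open $U_k\subseteq G_k$ with $f_{y_0}(U_k)\subseteq W_{\theta_k}$ for every $k$. Restricting $y_0$ to a prescribed $V$ is exactly what makes the induction close, and taking $V=Y$ recovers the lemma.

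For the base case $n=1$, the hypothesis $\overline{f^{-1}(W_{\theta_1})}=X\times Y$ means $f^{-1}(W_{\theta_1})$ meets the nonempty open set $G_1\times V$, so there is a point $(x_1,y_0)\in G_1\times V$ with $f(x_1,y_0)\in W_{\theta_1}$. Since $f_{y_0}$ is quasicontinuous at $x_1$, applying the definition with the neighborhood $G_1$ of $x_1$ and the neighborhood $W_{\theta_1}$ of $f_{y_0}(x_1)$ yields a nonempty open $U_1\subseteq G_1$ with $f_{y_0}(U_1)\subseteq W_{\theta_1}$.

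For the inductive step I would first apply the statement for $n-1$ (with the same $V$) to $G_1,\dots,G_{n-1}$ and $\theta_1,\dots,\theta_{n-1}$, obtaining $y'\in V$ and nonempty open $U_k'\subseteq G_k$ with $f_{y'}(U_k')\subseteq W_{\theta_k}$ for $k<n$. The difficulty is that the final $y_0$ must serve all $n$ conditions at once, whereas quasicontinuity only ever produces a single point of $Y$, not an open tube, so one cannot simply intersect product neighborhoods. The remedy is to transport each already-secured condition along a single sample point: choose $x_k\in U_k'$; by continuity of $f^{x_k}$ at $y'$ there is an open neighborhood $V_k\ni y'$ with $f^{x_k}(V_k)\subseteq W_{\theta_k}$; set $V^*=V\cap\bigcap_{k<n}V_k$, a nonempty open neighborhood of $y'$.

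Finally I would apply the base case inside $V^*$ to the remaining data $G_n,\theta_n$, producing $y_0\in V^*$ and a nonempty open $U_n\subseteq G_n$ with $f_{y_0}(U_n)\subseteq W_{\theta_n}$. Because $y_0\in V^*\subseteq V_k$ we still have $f_{y_0}(x_k)=f^{x_k}(y_0)\in W_{\theta_k}$ for each $k<n$, and since $x_k\in U_k'$ and $f_{y_0}$ is quasicontinuous at $x_k$, the definition (with the neighborhoods $U_k'$ and $W_{\theta_k}$) re-inflates the point $x_k$ to a nonempty open $U_k''\subseteq U_k'\subseteq G_k$ with $f_{y_0}(U_k'')\subseteq W_{\theta_k}$. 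Then $y_0\in V$ together with $U_1'',\dots,U_{n-1}'',U_n$ witnesses the statement for $n$. The one genuine obstacle is the sample-then-reinflate maneuver just described, which uses continuity in $y$ to move a secured condition from $y'$ to any nearby $y_0$ along a single point, while quasicontinuity restores it to a nonempty open set; everything else is bookkeeping.
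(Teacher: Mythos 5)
Your proof is correct, but it is organized differently from the paper's. The paper proves the lemma in one shot: it introduces, for each $k\leq n$, the set $B_k=\{y\in Y: f(G_k\times\{y\})\cap W_{\theta_k}\ne\emptyset\}$, observes that density of $f^{-1}(W_{\theta_k})$ makes $B_k$ dense in $Y$ while continuity of $f$ in the second variable makes $B_k$ open, picks a single $y_0\in\bigcap_{k=1}^n B_k$ (nonempty since a finite intersection of open dense sets is dense), and then applies quasicontinuity of $f_{y_0}$ once at each sample point $x_k\in G_k$ to inflate to the sets $U_k$. Your induction on $n$ with the localization parameter $V$ is, in effect, this same argument unrolled: your strengthened statement ``$y_0$ can be found in any prescribed nonempty open $V$'' is exactly the density of the (unnamed) set of good parameters, your neighborhoods $V_k$ obtained from continuity of $f^{x_k}$ at $y'$ witness its openness, and your base case is its density --- so your inductive step is precisely the standard proof that finitely many dense open subsets of $Y$ have dense intersection, inlined. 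The ingredients (density of $f^{-1}(W_{\theta_k})$ to sample, continuity in $y$ to stabilize a secured condition, quasicontinuity in $x$ to re-inflate a point to an open set) are identical in both arguments. What the paper's packaging buys is brevity and a single application of quasicontinuity per index $k$; what yours buys is that the key mechanism --- conditions secured at one parameter survive a small move of the parameter and can then be restored to open sets --- is made explicit step by step, at the cost of repeatedly shrinking the sets $U_k'$ to $U_k''$ along the way. Both are complete proofs; neither needs any hypothesis on $Y$ beyond what is stated.
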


\begin{proof} Since all sets $f^{-1}(W_{\theta_k})$ are dense in $X\times Y$, for every $k\leq n$ the set $B_k=\{y\in Y: f(G_k\times\{y\})\cap W_{\theta_{k}}\ne \O\}$ is dense in $Y$. Moreover, the continuity of $f$ with respect to the second variable imply that all sets $B_k$ are open in $Y$. Therefore, the set $\bigcap\limits_{k=1}^{n}B_k$ is nonempty. We take $y_0\in\bigcap\limits_{k=1}^{n}B_k$. There exist points $x_k\in G_k$ for $k\leq n$ such that $f(x_k,y_0)\in W_{\theta_k}$. Now using the quasicontinuity of $f$ with respect to the first variable we found nonempty open in $X$ sets $U_k\subseteq G_k$ such that $f_{y_0}(U_k)\subseteq W_{\theta_k}$ for every $k\leq n$.
\end{proof}

Let $X$ be a topological space. Define the Shoquet game on $X$ in which two players $\alpha$ and $\beta$ participate. A nonempty open in $X$ set $U_0$
is the first move of $\beta$ and a nonempty open in $X$ set $V_1\subseteq U_0$ is the first move of $\alpha$. Further $\beta$ chooses a nonempty open in $X$
set $U_1\subseteq V_1$ and $\alpha$ chooses a nonempty open in $X$ set $V_2\subseteq U_1$ and so on. The player $\alpha$ wins if $\bigcap\limits_{n=1}^{\infty}V_n\ne\O$. Otherwise $\beta$ wins.

A topological space $X$ is called {\it $\alpha$-favorable} if $\alpha$ has a winning strategy in this game. A topological space $X$ is called {\it $\beta$-unfavorable} if $\beta$ has no winning strategy in this game. Clearly, any $\alpha$-favorable topological space $X$ is a $\beta$-unfavorable space. It was shown in [6] that a topological game $X$ is Baire if and only if $X$ is $\beta$-unfavorable.

Let $X$ be a topological space, $x_0\in X$, ${\mathcal U}$ be a system of all neighborhoods of $x_0$ in $X$ and $f:X\to \mathbb R$. The real
$$\omega_f(x_0) = \inf\limits_{U\in {\mathcal U}}\sup\limits_{x',x''\in U}|f(x')-f(x'')|$$
is called by {\it the oscillation of the function $f$ at the point $x_0$}.

\begin{theorem} \label{th:2.2} Let $X$ be an $\alpha$-favorable space, $Y$ be a Baire space and $f\in KC(X\times Y,\mathbb R)$ such that $D(f)=X\times Y$. Then there exists a sequence $(y_n)^{\infty}_{n=1}$ of points $y_n\in Y$ such that for every set $N\in\mathbb N$ there exists a continuous function $g:Y\to[0,1]$ such that $g(y_n)=1$, if $n\in N$, and $g(y_n)=0$, if $n\in\mathbb N\setminus N$.
\end{theorem}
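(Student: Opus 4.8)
\emph{Proof plan.} The plan is to distill the everywhere discontinuity of $f$ into a single pair $W_0,W_1$ of open subsets of $\mathbb R$ with \emph{disjoint closures} whose $f$-preimages are both dense, and then to realize every $\{0,1\}$-pattern on a suitable sequence $(y_n)$ by a branching construction driven by Lemma~\ref{l:2.1} and by $\alpha$'s winning strategy.

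First I would produce $W_0,W_1$. The product of an $\alpha$-favorable space with a Baire space is Baire, so $X\times Y$ is a Baire space. The oscillation $\omega_f$ is upper semicontinuous and $D(f)=\{\omega_f>0\}=\bigcup_n\{\omega_f\ge 1/n\}=X\times Y$, whence by the Baire property some closed set $\{\omega_f\ge\varepsilon\}$ has nonempty interior $\Omega_0$. Writing $m$ and $M$ for the lower and upper envelopes of $f$ (so $M-m=\omega_f\ge\varepsilon$ on $\Omega_0$), at each point of $\Omega_0$ one may choose rationals $r_0<r_1$ with $m<r_0<r_1<M$; hence $\Omega_0$ is covered by the countably many sets $A_{r_0,r_1}=\{m<r_0\}\cap\{M>r_1\}$. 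Being Baire, $\Omega_0$ cannot be the union of nowhere dense sets, so some $A_{r_0,r_1}$ has closure containing a nonempty open box $G_0\times H_0$. Since $\{m<r_0\}\subseteq\overline{f^{-1}((-\infty,r_0))}$ and $\{M>r_1\}\subseteq\overline{f^{-1}((r_1,\infty))}$, both $f^{-1}(W_0)$ and $f^{-1}(W_1)$ are dense in $G_0\times H_0$, where $W_0=(-\infty,r_0)$ and $W_1=(r_1,\infty)$ have disjoint closures. Replacing $X,Y,f$ by $G_0$, $H_0$ and $f|_{G_0\times H_0}$ (an open subspace of an $\alpha$-favorable space is $\alpha$-favorable, and the $KC$-property passes to open subspaces), I may assume $\overline{f^{-1}(W_0)}=\overline{f^{-1}(W_1)}=X\times Y$, which is exactly the hypothesis of Lemma~\ref{l:2.1}.

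Next I would build a Cantor scheme. Fix a winning strategy $s$ for $\alpha$ in the Choquet game on $X$. By recursion on the level $n$ I construct points $y_n\in Y$ and nonempty open sets $U_\sigma\subseteq X$ for $\sigma\in\{0,1\}^n$, together with a legal partial play along each branch in which $\alpha$ follows $s$ and $U_\sigma$ is $\alpha$'s last move. To pass from level $n$ to level $n+1$, I apply Lemma~\ref{l:2.1} to the list $(U_\sigma)_{\sigma\in\{0,1\}^n}$ taken twice, with labels $\theta\equiv 0$ on the first copy and $\theta\equiv 1$ on the second; this yields a single $y_{n+1}\in Y$ and, for each $\sigma$ and each $i\in\{0,1\}$, a nonempty open $V_{\sigma i}\subseteq U_\sigma$ with $f_{y_{n+1}}(V_{\sigma i})\subseteq W_i$. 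Treating each $V_{\sigma i}$ as $\beta$'s next move and letting $\alpha$ answer by $U_{\sigma i}=s(\cdots V_{\sigma i})\subseteq V_{\sigma i}$ keeps every branch a legal $s$-play while preserving $f_{y_{n+1}}(U_{\sigma i})\subseteq W_i$ and $U_{\sigma i}\subseteq U_\sigma$.

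Finally, given $N\subseteq\mathbb N$, let $\theta\in\{0,1\}^{\mathbb N}$ be its characteristic sequence. Along the branch $\theta$ the sets $U_{\theta|_n}$ are precisely $\alpha$'s moves in an $s$-play, so $\bigcap_n U_{\theta|_n}\ne\emptyset$; choose $x_N$ in this intersection. Then $f(x_N,y_n)\in W_{\theta_n}$ for every $n$, and composing the continuous slice $f^{x_N}:Y\to\mathbb R$ with a continuous $\varphi:\mathbb R\to[0,1]$ that is $0$ on $\overline{W_0}$ and $1$ on $\overline{W_1}$ gives $g=\varphi\circ f^{x_N}$ with $g(y_n)=1$ for $n\in N$ and $g(y_n)=0$ otherwise. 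I expect the recursive step to be the main obstacle: one must run the single linear Choquet strategy along a branching tree so that all branches are simultaneously legal $s$-plays sharing \emph{one} point $y_{n+1}$ per level, ensuring that the \emph{same} sequence $(y_n)$ serves every pattern $N$ at once. Here Lemma~\ref{l:2.1} is exactly what furnishes the common $y_{n+1}$ together with the two-way splitting, while $\alpha$-favorability secures the nonempty branch intersections.
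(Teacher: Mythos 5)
Your proposal is correct and follows essentially the same route as the paper's proof: reduce via Baireness of $X\times Y$ (the oscillation/envelope argument you spell out is exactly the category step the paper states tersely when producing $a<b$ and $W_0=(-\infty,a)$, $W_1=(b,+\infty)$ dense-preimage on a box $X_0\times Y_0$), then run Lemma~\ref{l:2.1} once per level to get a single $y_{n+1}$ with binary splitting over a Cantor scheme of open sets, threading $\alpha$'s winning strategy along each branch so that $\bigcap_n U_{\xi|_n}\ne\O$, and finish by composing the continuous slice $f^{x_0}$ with a function $\varphi$ separating $W_0$ from $W_1$. Your swap of the roles of the $U$'s and $V$'s relative to the paper's bookkeeping is immaterial.
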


\begin{proof} According to $[6]$, the space $X\times Y$ is Baire. Therefore there exist open in $X$ and $Y$ respectively sets $X_1\subseteq X$ and $Y_1\subseteq Y$, and  $\varepsilon>0$ such that $\omega_f(x,y)\geq\varepsilon$ for every $(x,y)\in X_1\times Y_1$. Using the fact that $X_1\times Y_1$ is Baire, we found nonempty open in $X$ and $Y$ respectively sets $X_0\subseteq X_1$ and $Y_0\subseteq Y_1$, reals $a,b \in\mathbb R$ with $a< b$ such that the sets $f^{-1}(W_0)$ end $f^{-1}(W_1)$ are dense in $X_0\times Y_0$, where $W_0=(-\infty,a)$ ³ $W_1=(b,+\infty)$.

Let ${\mathcal T}$ is the topology of the space $X$ and $\tau:\bigcup\limits^{\infty}_{n=1}{\mathcal T}^{2n+1}\to {\mathcal T}$ is a winning strategy of $\alpha$ in the Shoquet game on the topological space $X$.

For every $n\in\mathbb N\cup \{\omega_0\}$, $\xi=(\xi_1,\xi_2,\dots)\in \{0,1\}^n$ and $k<n$ we put $\xi|_k=(\xi_1,\xi_2,\dots,\xi_k)$.

Using the induction on $n\in\mathbb N$ we construct sequences of families $(U_{\xi}:\xi\in \{0,1\}^n)$ and $(V_{\xi}:\xi\in \{0,1\}^n)$ of open in $X$ nonempty sets $U_{\xi}$ and $V_{\xi}$ and a sequence $(y_n)_{n=1}^{\infty}$ of points $y_n\in Y$ such that:

$(i)$\,\,\, $V_{\xi}=\tau(U_{\xi|_1},V_{\xi|_1},\dots,U_{\xi})$ for every $n\in\mathbb N$ ³ $\xi\in\{0,1\}^n$;

$(ii)$\,\,\, $U_{\xi}\subseteq V_{\xi|_n}$ for every $n\in\mathbb N$ and $\xi\in\{0,1\}^{n+1}$;

$(iii)$\,\,\, $f_{y_n}(U_{\xi})\subseteq W_{\xi_{n}}$ for every $n\in\mathbb N$ ³ $\xi=(\xi_1,\xi_2,\dots,\xi_n)\in \{0,1\}^n$.

According to Lemma \ref{l:2.1}, we choose a point $y_1\in Y_0$ and open in $X$ nonempty sets $U_0$ and $U_1$ such that $f_{y_1}(U_\xi)\subseteq W_\xi$ for every $\xi\in\{0,1\}$. Put $V_0=\tau(U_0)$ ³ $V_1=\tau(U_1)$.

Assume that the points $y_k\in Y$, the families $(U_\xi:\xi\in \{0,1\}^k)$ and $(V_\xi:\xi\in \{0,1\}^k)$ for $k\leq n$ are constructed. For every $\xi=(\xi_1,\xi_2,\dots,\xi_{n+1})\in \{0,1\}^{n+1}$ put $G_\xi=V_{\xi|_n}$ ³ $\theta_\xi=\xi_{n+1}$. Then according to Lemma \ref{l:2.1}, there exist $y_{n+1}\in Y$ and a family $(U_\xi:\xi\in \{0,1\}^{n+1})$ of nonempty open in $X$ sets $U_\xi$ such that $U_\xi\subseteq G_\xi$ and $f_{y_{n+1}}(U_\xi)\subseteq W_{\theta_\xi}$, that is the conditions $(ii)$ and $(iii)$ are true for every $\xi\in \{0,1\}^{n+1}$. It remains to put $V_\xi=\tau(U_{\xi|_1},V_{\xi|_1}, \dots, U_\xi)$ for all $\xi\in
\{0,1\}^{n+1}$.

Show that the sequence $(y_n)^{\infty}_{n=1}$ is the required. Let $N\subseteq \mathbb N$. Put $\xi_n=1$, if $n\in N$, $\xi_n=0$, if $n\in\mathbb N\setminus N$, and $\xi=(\xi_n)_{n=1}^{\infty}$. According to $(i)$ and $(ii)$, we have $U_{\xi|_{n+1}}\subseteq V_{\xi|_n}\subseteq U_{\xi|_n}$ for every $n\in\mathbb N$. Note that the player $\alpha$ plays accordingly with the winner strategy $\tau$ in the Shoquet game
$$U_{\xi|_1}\subseteq V_{\xi|_1}\subseteq \dots .$$ Therefore $\bigcap\limits_{n=1}^{\infty}U_{\xi|_n}\ne\O$.

Let $x_0\in \bigcap\limits_{n=1}^{\infty}U_{\xi|_n}$. According to $(iii)$, we have $f(x_0,y_n)\in W_1$, if $n\in N$, and $f(x_0,y_n)\in W_0$, ÿêùî $n\in\mathbb N\setminus N$. Take an continuous function $\varphi:\mathbb R\to [0,1]$ such that $W_0\subseteq \varphi^{-1}(0)$ and $W_1\subseteq \varphi^{-1}(1)$.
Then the continuous function $g:Y\to [0,1]$, $g(y)=\varphi(f(x_0,y))$, is the required.
\end{proof}

The following Corollary is a main result of this section.

\begin{corollary}\label{c:2.3} Let $X$ be an $\alpha$-favorable space, $Y$ be a regular space in which every separable closed set is compact and $f\in KC(X\times Y,\mathbb R)$ such that $D(f)=X\times Y$. Then there exists a compact in $Y$ set $Y_0$, which is homeomorphic to the space $\beta\mathbb N$.
\end{corollary}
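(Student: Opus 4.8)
The plan is to feed the sequence produced by Theorem~\ref{th:2.2} into the classical characterization of the Stone--\v{C}ech compactification of $\mathbb N$ among compactifications of $\mathbb N$. The first thing I would check is that Theorem~\ref{th:2.2} is actually applicable, i.e.\ that $Y$ is Baire. The hypothesis on $Y$ is exactly $\omega$-boundedness: a closed set is separable iff it is the closure of a countable set, so ``every separable closed set is compact'' says precisely that the closure of every countable subset of $Y$ is compact. In particular no infinite countable subset can be closed and discrete, so every infinite countable subset has a cluster point and $Y$ is countably compact. A countably compact regular space is Baire (given dense open $U_n$ and nonempty open $W$, use regularity and density to build nonempty open $V_n$ with $\overline{V_{n+1}}\subseteq V_n\subseteq W\cap U_n$; countable compactness forces $\bigcap_n\overline{V_n}\neq\emptyset$, a point of $W\cap\bigcap_n U_n$). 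Hence Theorem~\ref{th:2.2} yields a sequence $(y_n)_{n=1}^{\infty}$ in $Y$ such that every $N\subseteq\mathbb N$ is separated from its complement by a continuous $g\colon Y\to[0,1]$ with $g(y_n)=1$ for $n\in N$ and $g(y_n)=0$ for $n\notin N$.

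Next I would set $D=\{y_n:n\in\mathbb N\}$ and $Y_0=\overline{D}$. Applying the separating functions to singletons $N=\{m\}$ shows that the $y_n$ are pairwise distinct and that $D$ is relatively discrete: indeed $g^{-1}((\tfrac12,1])$ is a neighbourhood of $y_m$ meeting $D$ only in $y_m$. Thus $D$ is a copy of the countable discrete space $\mathbb N$, dense in $Y_0$ by construction. Since $D$ is countable, $Y_0$ is separable; being also closed, it is compact by hypothesis, and since $Y$ is regular (and $T_1$) it follows that $Y_0$ is compact Hausdorff. So $Y_0$ is a compactification of $\mathbb N$.

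It then remains to verify the defining property of $\beta\mathbb N$: any two disjoint subsets of $\mathbb N$ have disjoint closures in $Y_0$. Given disjoint $A,B\subseteq D$, choose $N\subseteq\mathbb N$ with $A\subseteq\{y_n:n\in N\}$ and $B\subseteq\{y_n:n\notin N\}$, and let $g$ be the corresponding separating function. Restricting $g$ to $Y_0$ and using continuity, $\overline{A}\subseteq g^{-1}(1)$ and $\overline{B}\subseteq g^{-1}(0)$, which are disjoint closed subsets of $Y_0$. By the standard characterization (a compactification $K$ of the discrete space $\mathbb N$ is homeomorphic to $\beta\mathbb N$ precisely when disjoint subsets of $\mathbb N$ have disjoint closures in $K$), we conclude $Y_0\cong\beta\mathbb N$, and $Y_0$ is compact in $Y$, as required.

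The genuine mathematical content is already carried by Theorem~\ref{th:2.2}; for the corollary the only non-formal steps are (a) recognizing the compactness assumption as $\omega$-boundedness, so that $Y$ is Baire and $Y_0=\overline{D}$ is compact, and (b) matching the $\{0,1\}$-separation property with the disjoint-closures characterization of $\beta\mathbb N$. The step I would watch most carefully is (a): the whole reduction hinges on $\overline{D}$ being compact, which is exactly where the ``separable closed $\Rightarrow$ compact'' hypothesis is used, and on the fact that this same hypothesis upgrades $Y$ to a Baire space so that Theorem~\ref{th:2.2} is legitimately available.
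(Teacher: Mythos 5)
Your proof is correct and takes essentially the same route as the paper: apply Theorem \ref{th:2.2}, set $Y_0=\overline{\{y_n:n\in\mathbb N\}}$ (compact and Hausdorff by the hypothesis on separable closed sets and regularity), and conclude that $Y_0\cong\beta\mathbb N$ via the disjoint-closures characterization of the Stone--Cech compactification of $\mathbb N$, which is precisely what the paper invokes by citing [7, Corollary 3.6.4]. The only difference is that you spell out the routine verifications the paper leaves implicit (that $Y$ is Baire via countable compactness, and that the $y_n$ are distinct and relatively discrete), all of which are carried out correctly.
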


\begin{proof} It easy to see that every regular space, in which each separable closed set is compact, is $\alpha$-favorable, in particular, a Baire space. According to Theorem \ref{th:2.2}, we choose a sequence $(y_n)^{\infty}_{n=1}$ which satisfies the corresponding condition and put $Y_0=\overline{\{y_n:n\in\mathbb N\}}$. Then according to [7, Corollary 3.6.4] the space $Y_0$ is homeomorphic to $\beta\mathbb N$.
\end{proof}

\section{Stone-Cech compactification and $p$-sets}

A system ${\mathcal A}$ of subsets of a set $X$ is called {\it ultrafilter on $X$}, if the following conditions hold:

$(a)$\,\,\,$\bigcap {\mathcal B}\ne\O$ for every finite system ${\mathcal B}\subseteq{\mathcal A}$;

$(b)$\,\,\,euther $A\in {\mathcal A}$ or $X\setminus A\in {\mathcal A}$ for every set $A\subseteq X$.

Let ${\mathcal F}$ be the collection of all ultrafilters on $\mathbb N$. Clearly (see [7,Corollary 3.6.4]) that a mapping $\varphi: \beta\mathbb N\to {\mathcal F}$,
$\varphi(x)=\{A\subseteq\mathbb N: x\in\overline{A}\}$, is a bijection, besides $\varphi(n)=\{A\subseteq \mathbb N: n\in A\}$ for every $n\in\mathbb N$. Moreover, for every $x\in \beta\mathbb N\setminus\mathbb N$ the ultrafilter $\varphi(x)$ is called {\it nontrivial} and it has the following property: if $A\in \varphi(x)$ and $B\subseteq\mathbb N$ such that $|A\setminus B|<\aleph_0$ then $B\in\varphi(x)$.

Further, the elements $x\in \beta\mathbb N\setminus \mathbb N$ we will identify with $\varphi(x)$. Note that for every closed-open nonempty set $U\subseteq\beta\mathbb
N\setminus\mathbb N$ there exists an infinite set $A\subseteq \mathbb N$ such that $U=\{x\in\beta\mathbb N\setminus\mathbb N: A\in x\}$.

\begin{lemma}\label{l:3.1} Let $X=\beta\mathbb N\setminus \mathbb N$, $(A_n)^{\infty}_{n=1}$ ³ $(B_n)^{\infty}_{n=1}$ be sequences of closed in $X$ sets $A_n, B_n\subseteq X$ such that $\overline{A}\cap B=A\cap \overline{B}=\O$, where $A=\bigcup\limits_{n=1}^{\infty}A_n$ and $B=\bigcup\limits_{n=1}^{\infty}B_n$. Then
$\overline{A}\cap\overline{B}=\O$.
\end{lemma}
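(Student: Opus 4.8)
The plan is to reduce the statement to a purely combinatorial fact about subsets of $\mathbb N$, exploiting that $X=\beta\mathbb N\setminus\mathbb N$ is compact Hausdorff with a base of clopen sets, and that (writing $P^{*}$ for the clopen set $\{x\in X:P\in x\}$) one has $P^{*}\subseteq Q^{*}$ iff $P\setminus Q$ is finite, and $P^{*}\cap Q^{*}=(P\cap Q)^{*}$, which is empty iff $P\cap Q$ is finite. The key observation is that it suffices to produce a single clopen set $M^{*}$ with $A\subseteq M^{*}$ and $B\cap M^{*}=\emptyset$: since $M^{*}$ is clopen, $A\subseteq M^{*}$ forces $\overline A\subseteq M^{*}$, while $B\subseteq X\setminus M^{*}$ forces $\overline B\subseteq X\setminus M^{*}$, so that $\overline A\cap\overline B\subseteq M^{*}\cap(X\setminus M^{*})=\emptyset$.

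To build such an $M$, I would first replace the closed sets $A_n,B_n$ by crosswise disjoint clopen approximations. Construct by induction on $n$ clopen sets $C_n\supseteq A_n$ and $D_n\supseteq B_n$ with $C_i\cap D_j=\emptyset$ for all $i,j$. This is where both hypotheses are consumed: at stage $n$ the set $A_n$ is disjoint from the closed set $\overline B\cup D_1\cup\dots\cup D_{n-1}$, because $A_n\subseteq A$ is disjoint from $\overline B$ and $A_n\subseteq\overline A$ is disjoint from each $D_j$ via the invariant $D_j\cap\overline A=\emptyset$; compactness together with the clopen base then yields a clopen $C_n$ with $A_n\subseteq C_n$ and $C_n\cap(\overline B\cup D_1\cup\dots\cup D_{n-1})=\emptyset$. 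Symmetrically, $B_n$ is disjoint from the closed set $\overline A\cup C_1\cup\dots\cup C_n$ (using $\overline A\cap B=\emptyset$ and $C_i\cap\overline B=\emptyset$), giving a clopen $D_n\supseteq B_n$ missing it. Writing $C_n=P_n^{*}$ and $D_n=Q_n^{*}$, crosswise disjointness translates into $P_i\cap Q_j$ being finite for all $i,j$.

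The remaining, and main, step is the combinatorial lemma: if $P_i\cap Q_j$ is finite for all $i,j$, then there is $M\subseteq\mathbb N$ with $P_n\setminus M$ finite and $M\cap Q_n$ finite for every $n$. I would prove it by passing to the increasing sequences $\widetilde P_n=P_1\cup\dots\cup P_n$ and $\widetilde Q_n=Q_1\cup\dots\cup Q_n$ (still crosswise almost disjoint, and an $M$ good for these is good for the originals), and then setting $M=\bigcup_n(\widetilde P_n\setminus\widetilde Q_n)$. A short check using monotonicity shows $\widetilde P_n\setminus M\subseteq\widetilde P_n\cap\widetilde Q_n$ is finite, while $M\cap\widetilde Q_m$ splits into the terms with index $n\ge m$, which vanish because $\widetilde Q_m\subseteq\widetilde Q_n$, and the finitely many terms with $n<m$, each contained in $\widetilde P_m\cap\widetilde Q_m$ and hence finite. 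Then $P_n\setminus M$ finite gives $P_n^{*}\subseteq M^{*}$, so $A\subseteq\bigcup_n C_n\subseteq M^{*}$, while $M\cap Q_n$ finite gives $Q_n^{*}\cap M^{*}=\emptyset$, so $B\subseteq\bigcup_n D_n\subseteq X\setminus M^{*}$, and the reduction above finishes the proof.

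I expect the crosswise-disjoint clopen construction and the almost-disjointness lemma to be the heart of the matter: the former because it is the place where both separation hypotheses $\overline A\cap B=\emptyset$ and $A\cap\overline B=\emptyset$ are genuinely used, and the latter because it is the single combinatorial ingredient. The surrounding topological reductions are routine consequences of the compactness and zero-dimensionality of $\beta\mathbb N\setminus\mathbb N$.
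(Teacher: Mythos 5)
Your proof is correct and takes essentially the same route as the paper's: the same inductive clopen separation $C_n\supseteq A_n$, $D_n\supseteq B_n$ (whose construction the paper merely asserts as easy, and which you rightly flag as the place where both hypotheses $\overline{A}\cap B=\O$ and $A\cap\overline{B}=\O$ are spent), the same translation to crosswise almost-disjoint subsets of $\mathbb N$, and in fact your set $M=\bigcup\limits_{n=1}^{\infty}\bigl(\widetilde{P}_n\setminus\widetilde{Q}_n\bigr)$ coincides with the paper's $S=\bigcup\limits_{n=1}^{\infty}\bigl(S_n\setminus\bigcup\limits_{k=1}^{n}T_k\bigr)$. The only difference is cosmetic: the paper builds a second set $T$ symmetrically and verifies $S\cap T=\O$, whereas you separate $\overline{B}$ by the complement $X\setminus M^{*}$, which is marginally more economical.
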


\begin{proof} Using the induction on $n$ it easy to construct sequences $(U_n)_{n=1}^{\infty}$ and $(V_n)_{n=1}^{\infty}$ of closed-open in $X$ sets $U_n$ and $V_n$ such that $A_n\subseteq U_n$, $B_n\subseteq V_n$ for every $n\in\mathbb N$ and $(\bigcup\limits_{n=1}^{\infty}U_n)\bigcap (\bigcup\limits_{n=1}^{\infty}V_n)=\O$. We choose sequences $(S_n)_{n=1}^{\infty}$ and $(T_n)_{n=1}^{\infty}$ of sets $S_n,T_n\subseteq \mathbb N$ such that $U_n=\{x\in X: S_n\in x\}$ and $V_n=\{x\in X: T_n\in x\}$ for every $n\in\mathbb N$. Since $U_n\bigcap V_m=\O$, $|S_n\bigcap T_m|<\aleph_0$ for every $n,m\in\mathbb N$. Put $$S=\bigcup\limits_{n=1}^{\infty}(S_n\setminus(\bigcup\limits_{k=1}^{n}T_k))\,\,\,{\mbox and}\,\,\,T=\bigcup\limits_{n=1}^{\infty}(T_n\setminus(\bigcup\limits_{k=1}^{n}S_k)).$$

We show that $S\bigcap T=\O$. Suppose that $m\in S\bigcap T$. Taking onto account that $S\subseteq\bigcup\limits_{n=1}^{\infty}S_n$ and $T\subseteq\bigcup\limits_{n=1}^{\infty}T_n$, we put $i={\rm min}\{n\in\mathbb N:m\in S_n\}$ ³ $j={\rm min}\{n\in\mathbb N:m\in T_n\}$.

If $i\leq j$, then $m\not\in T_n$ for $n<j$ and $m\not\in T_n\setminus(\bigcup\limits_{k=1}^{n}S_k)$ for $n\geq j$. Thus, $m\not\in T$, a contradiction. Analogously, $m\not\in S$ if $j\leq i$.

Morefore, note that $S_n\setminus S\subseteq S_n\setminus(S_n\setminus \bigcup\limits_{k=1}^{n}T_k)\subseteq \bigcup\limits_{k=1}^{n}(S_n\bigcap T_k)$ and $T_n\setminus
T\subseteq T_n\setminus(T_n\setminus \bigcup\limits_{k=1}^{n}S_k)\subseteq \bigcup\limits_{k=1}^{n}(T_n\bigcap S_k)$ for every $n\in\mathbb N$. Therefore all sets  $S_n\setminus S$ and $T_n\setminus T$ are finite, $U_n\subseteq U=\{x\in X: S\in x\}$ and $V_n\subseteq V=\{x\in X: T\in x\}$ for every $n\in\mathbb N$, besides the closed-open in $X$ sets $U$ and $V$ such that $U\bigcap V=\O$.
\end{proof}
The next result follows from [7, Corollary 3.6.4].

\begin{corollary} \label{c:3.2} Let  $A\subseteq \beta \mathbb N\setminus \mathbb N$ be a countable set. Then the closure $\overline{A}$ of $A$ in the space $\beta \mathbb N\setminus \mathbb N$ is homeomorphic to the Stone-Cech compactification of the space $A$.
\end{corollary}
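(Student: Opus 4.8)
The plan is to realise $\overline{A}$ as the Stone--\v{C}ech compactification of $A$ by verifying that $A$ is $C^*$-embedded in $\overline{A}$. First note that $X=\beta\mathbb N\setminus\mathbb N$ is compact Hausdorff (it is closed in $\beta\mathbb N$), so $\overline{A}$ is a Hausdorff compactification of the countable Tychonoff space $A$. By the standard characterisation of the Stone--\v{C}ech compactification [7, Corollary 3.6.4], it suffices to prove that $A$ is $C^*$-embedded in $\overline{A}$, and by the Urysohn Extension Theorem this will follow once I show that any two completely separated subsets $P,Q\subseteq A$ have disjoint closures in $\overline{A}$ (equivalently, in $X$): indeed, $\overline{A}$ is compact Hausdorff, hence normal, so disjoint closures $\overline{P}\cap\overline{Q}=\emptyset$ yield complete separation of $P$ and $Q$ in $\overline{A}$ by Urysohn's lemma.

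So fix completely separated $P,Q\subseteq A$, witnessed by a continuous $h:A\to[0,1]$ with $h|_P\equiv 0$ and $h|_Q\equiv 1$. Since $A$ is countable I may enumerate $P=\{p_n:n\in\mathbb N\}$ and $Q=\{q_n:n\in\mathbb N\}$ and set $A_n=\{p_n\}$, $B_n=\{q_n\}$; as $X$ is Hausdorff these singletons are closed, and $A=\bigcup_n A_n=P$, $B=\bigcup_n B_n=Q$. The next step is to check the hypotheses of Lemma \ref{l:3.1}, namely $\overline{P}\cap Q=P\cap\overline{Q}=\emptyset$ (closures in $X$). For this, take $q\in Q$; continuity of $h$ at $q$ gives a neighbourhood $U$ of $q$ in $A$ on which $h>\tfrac12$, so $U\cap P=\emptyset$ and hence $q\notin \operatorname{cl}_A P=\overline{P}\cap A$, i.e. $q\notin\overline{P}$. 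Thus $\overline{P}\cap Q=\emptyset$, and symmetrically $P\cap\overline{Q}=\emptyset$.

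Now Lemma \ref{l:3.1} applies to the sequences $(A_n)$ and $(B_n)$ and yields $\overline{P}\cap\overline{Q}=\emptyset$. By the reduction in the first paragraph this proves that every pair of completely separated subsets of $A$ is completely separated in $\overline{A}$, so $A$ is $C^*$-embedded in $\overline{A}$, and therefore $\overline{A}$ is homeomorphic to $\beta A$. The only genuine topological content is the disjointness of closures, which is exactly Lemma \ref{l:3.1} (the $F$-space behaviour of $\beta\mathbb N\setminus\mathbb N$); the remaining work is bookkeeping, and the main point to get right is the translation of ``completely separated in $A$'' into the closure-separation hypothesis of Lemma \ref{l:3.1}, via the countability of $A$ (to express $P,Q$ as countable unions of closed singletons) and the normality of the compactum $\overline{A}$.
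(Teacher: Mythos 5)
Your proof is correct and is essentially the argument the paper leaves implicit: the paper's one-line proof merely cites [7, Corollary 3.6.4], with Lemma \ref{l:3.1} (placed immediately before, precisely for this purpose) supplying the needed $C^*$-embedding of the countable set, which is exactly what you spell out by decomposing $P$ and $Q$ into closed singletons and invoking the Urysohn extension theorem together with normality of the compactum $\overline{A}$. The translation of complete separation in $A$ into the closure hypotheses of Lemma \ref{l:3.1} is carried out correctly, so there is no gap (the degenerate case of finite or empty $P$, $Q$ being trivial).
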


A subset $A$ of a topological space $X$ is called {\it $p$-set}, if
$$A\subseteq G={\rm
int}\left(\bigcap\limits_{n=1}^{\infty}G_n\right)$$
for every sequence $(G_n)_{n=1}^{\infty}$ of open in $X$ sets $G_n$ with $A\subseteq G_n$ for every $n\in\mathbb N$.

\begin{proposition}\label{p:3.3}
Let ${\mathcal P}$ be a system of all closed nowhere dense $p$-sets in $X=\beta \mathbb N\setminus \mathbb N$. Then

$(i)$\,\,\, the set $\bigcup{\mathcal P}=\bigcup\limits_{P\in{\mathcal P}}P$ is dense in $X$;

$(ii)$\,\,\, $U\bigcap P\in{\mathcal P}$ for every closed-open in $X$ sets $U$ and $P\in{\mathcal P}$;

$(iii)$\,\,\,$P=\overline{\bigcup\limits_{n=1}^{\infty}P_n}\in{\mathcal P}$ for every sequence $(P_n)_{n=1}^{\infty}$ of sets $P_n\in{\mathcal P}$.
\end{proposition}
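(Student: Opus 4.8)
The plan is to handle the three items in the order (ii), (iii), (i), since the genuinely delicate point --- the existence of nonempty members of $\mathcal P$ --- is what (i) really rests on, whereas (ii) and (iii) are structural and follow from Lemma \ref{l:3.1} together with elementary facts about the clopen algebra of $X=\beta\mathbb N\setminus\mathbb N$ (equivalently, about $\mathcal P(\mathbb N)/\mathrm{fin}$, using the identification of clopen sets with infinite subsets of $\mathbb N$ recorded before the lemma).

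For (ii), closedness of $U\cap P$ is immediate and nowhere density is inherited from $P$. To see that $U\cap P$ is a $p$-set I would, given open sets $G_m\supseteq U\cap P$, pass to the enlarged sets $G_m'=G_m\cup(X\setminus U)$; since $U$ is clopen these are open, and $P\subseteq G_m'$ because $P\setminus U\subseteq X\setminus U$. The $p$-set property of $P$ then gives $P\subseteq{\rm int}\bigcap_m G_m'$, and since $U\cap\bigcap_m G_m'=U\cap\bigcap_m G_m$, the open set $U\cap{\rm int}\bigcap_m G_m'$ contains $U\cap P$ and is contained in $\bigcap_m G_m$, whence $U\cap P\subseteq{\rm int}\bigcap_m G_m$. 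The same enlargement argument shows that a $p$-set of a clopen subspace is a $p$-set of $X$, a fact I will reuse in (i).

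For (iii), $P=\overline{\bigcup_n P_n}$ is closed by definition. Nowhere density is a feature of $X$ alone: given a nonempty clopen $U$, I would shrink along nonempty clopen $U\supseteq W_1\supseteq W_2\supseteq\cdots$ with $W_n\cap P_n=\emptyset$ (possible since each $P_n$ is nowhere dense), represent $W_n$ by infinite $B_n\subseteq\mathbb N$ with $B_{n+1}\subseteq^* B_n$, and take an infinite pseudo-intersection $B\subseteq^* B_n$; then the nonempty clopen set $B^*\subseteq\bigcap_n W_n$ misses every $P_n$, so $\bigcup_n P_n$ is not dense in $U$, and $U$ being arbitrary gives nowhere density. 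The $p$-set property is exactly where Lemma \ref{l:3.1} enters. Given open $G_m\supseteq P$, set $A_n=P_n$ and $B_m=X\setminus G_m$. Then $\overline{\bigcup_n A_n}\cap B_m=P\cap(X\setminus G_m)=\emptyset$; and since each $P_n$ is a $p$-set with $P_n\subseteq G_m$ for all $m$, we get $P_n\subseteq{\rm int}\bigcap_m G_m=X\setminus\overline{\bigcup_m B_m}$, i.e. $(\bigcup_n A_n)\cap\overline{\bigcup_m B_m}=\emptyset$. Both hypotheses of Lemma \ref{l:3.1} hold, so $P\cap\overline{\bigcup_m(X\setminus G_m)}=\emptyset$, which is precisely $P\subseteq{\rm int}\bigcap_m G_m$.

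For (i) I would first reduce density to a single existence assertion: every nonempty clopen set is homeomorphic to $X$ via a bijection of the underlying subset of $\mathbb N$ with $\mathbb N$, and by the lifting remark from (ii) a $p$-set of a clopen subspace is a $p$-set of $X$; hence it suffices to produce one nonempty closed nowhere dense $p$-set and transport a copy into each basic clopen set. To construct one I would take a filter $\mathcal F$ on $\mathbb N$ that is a $P$-filter (every countable subfamily has a pseudo-intersection lying in $\mathcal F$) but has no infinite pseudo-intersection at all --- for instance the filter of sets of asymptotic density $1$ --- and put $P_0=\{x\in X:\mathcal F\subseteq x\}$. The $P$-filter property yields the $p$-set property directly: if $S_m^*\supseteq P_0$ then $S_m\in\mathcal F$, a pseudo-intersection $S\in\mathcal F$ of the $S_m$ gives $P_0\subseteq S^*\subseteq\bigcap_m S_m^*$; the absence of a pseudo-intersection says no basic clopen set lies in $P_0$, i.e. $P_0$ is nowhere dense (and non-$G_\delta$); and $P_0\neq\emptyset$ since $\mathcal F$ is proper. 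I expect this construction to be the main obstacle: everything else is bookkeeping with clopen sets and one application of Lemma \ref{l:3.1}, while checking that a concrete $\mathcal F$ is at once a $P$-filter and pseudo-intersection-free is the substantive content.
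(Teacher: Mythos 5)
Your proposal is correct, and it is substantially more self-contained than the paper, which only actually proves part (iii). For (iii) your argument coincides with the paper's: given open $G_m\supseteq P$, the paper applies Lemma \ref{l:3.1} to the closed sets $A_n=X\setminus G_n$ and $B_n=P_n$, verifying the two one-sided disjointness hypotheses exactly as you do (your roles of $A$ and $B$ are swapped, but the computation is identical); the one difference is that for nowhere-density the paper quotes the fact that every nonempty $G_\delta$-set in $\beta\mathbb N\setminus\mathbb N$ has nonempty interior (an Engelking exercise), which you instead re-derive by hand with the $\subseteq^*$-tower and pseudo-intersection argument. For (i) and (ii) the paper gives no argument at all --- both are dismissed with a citation to an exercise in Engelking --- whereas you supply real proofs: the clopen-enlargement trick $G_m'=G_m\cup(X\setminus U)$ for (ii) is correct, as is the lifting remark, and your construction for (i) (the set $P_0$ of ultrafilters extending a P-filter with no infinite pseudo-intersection, e.g.\ the asymptotic-density-one filter, transported into every clopen set via the homeomorphism $A^*\cong\mathbb N^*$) is a genuine ZFC construction: the density-zero ideal is a standard P-ideal, and every infinite set contains an infinite density-zero subset whose complement witnesses pseudo-intersection-freeness, so your candidate filter does work. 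Two small steps you should make explicit but which are routine: in verifying that $P_0$ is a $p$-set you may assume the neighborhoods are clopen sets $S_m^*$ only after shrinking, which uses compactness of the closed set $P_0$ together with zero-dimensionality of $X$; and the inference from $S_m^*\supseteq P_0$ to $S_m\in\mathcal F$ needs compactness (finitely many $A_1,\dots,A_k\in\mathcal F$ with $(A_1\cap\dots\cap A_k)^*\subseteq S_m^*$, so $S_m\supseteq^* A_1\cap\dots\cap A_k$, and $\mathcal F$ is closed under almost-supersets). Neither is a gap. In sum: your route buys a proof readable without Engelking and isolates where the set-theoretic content lives (the P-filter construction), at the cost of length; the paper buys brevity by outsourcing (i) and (ii) entirely.
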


\begin{proof} Conditions $(i)$ and $(ii)$ immediately follows from [7, exercise 3.6.À]. We prove $(iii)$. Let  $P_n\in{\mathcal P}$ for every $n\in\mathbb N$. Since every nonempty $G_{\delta}$-set in $X$ has nonempty interior (see [7, exercise 3.6.À]), the set $P=\overline{\bigcup\limits_{n=1}^{\infty}P_n}$ is nowhere dense in $X$. It remains to show that $P$ is a $p$-set in $X$.

Let $(G_n)^{\infty}_{n=1}$ be a sequence of open in $X$ sets $G_n$ such that $P\subseteq\bigcap\limits_{n=1}^{\infty}G_n$. Put $A_n=X\setminus G_n$ for every $n\in\mathbb N$, $A=\bigcup\limits_{n=1}^{\infty}A_n$ and $B=\bigcup\limits_{n=1}^{\infty}P_n$. Since $P_n\in{\mathcal P}$ for every $n\in\mathbb N$, $B\subseteq {\rm int}(\bigcap\limits_{n=1}^{\infty}G_n)$, that is $B\bigcap \overline{A}=\O$. Moreover, $P=\overline{B}\subseteq \bigcap\limits_{n=1}^{\infty}G_n$, therefore $\overline{B}\bigcap A=\O$. According to Lemma \ref{l:3.1},  we have $\overline{A}\bigcap\overline{B}=\O$, that is $P\subseteq{\rm
int}(\bigcap\limits_{n=1}^{\infty}G_n)$.
\end{proof}

Now we give an example of everywhere discontinuous separately continuous function defined on the product of $\alpha$-favorable space $X$ and counably compact dense subspace of $\beta\mathbb N\setminus\mathbb N$.

\begin{example}\label{ex:3.4} Let $X$ be a set of all continuous functions $x:\beta \mathbb N\setminus \mathbb N \to \{0,1\}$, ${\mathcal P}$ be a system of all closed nowhere dense $p$-sets $P\subseteq \beta \mathbb N\setminus \mathbb N$ and $Y=\bigcup\limits_{P\in{\mathcal P}}P$. We consider the space $X$ with the topology of uniform convergence on sets of the system ${\mathcal P}$. That is for every $x\in X$ the system $\{U(x,P):P\in{\mathcal P}\}$ forming a base of neighborhoods of $x$ in the space $X$, where $U(x,P)=\{x'\in X: x'(t)=x(t)\,\,\forall t\in P\}$.

Consider the separately continuous function $f:X\times Y\to\mathbb R$, $f(x,y)=x(y)$. Since in $Y$ the system of all closed-open sets forming a base of the topology and every set $P\in{\mathcal P}$ is nowhere dense in $Y$, the function $f$ is discontinuous at every point $(x_0,y_0)\in X\times Y$.

Now we show that the space $X$ is $\alpha$-favorable. Let $(U_n)^{\infty}_{n=1}$ is a decreasing sequence of nonempty basic open sets in $X$. Then there exist increasing sequences $(P_n)^{\infty}_{n=1}$ and $(Q_n)^{\infty}_{n=1}$ of sets $P_n, Q_n\in{\mathcal P}$ such that
$$U_n=\{x\in X: x(y)=0\,\,\forall y\in P_n \,\,{\mbox and}\,\, x(y)=1 \,\,\forall y\in Q_n\}.$$
Put $P=\overline{\bigcup\limits_{n=1}^{\infty} P_n}$ and $Q=\overline{\bigcup\limits_{n=1}^{\infty} Q_n}$. Proposition \ref{p:3.3} imply that $P,Q\in{\mathcal P}$. Moreover, it follows from the definition of $p$-set that $P_n\bigcap Q = P\bigcap Q_n =\O$ for every $n\in\mathbb N$. Therefore according to Lemma \ref{l:3.1}, $P\bigcap Q = \O$. Now choose a continuous on $\beta \mathbb N\setminus\mathbb N$ function $x_0$ such that $x_0(y)=0$ for every $y\in P$ and $x_0(y)=1$ for every $y\in Q$ and obtain $x_0\in \bigcap\limits_{n=1}^{\infty}U_n$.
\end{example}

A positive answer to the following question gives the solution of Talagrand problem.

\begin{question}\label{q:3.5} Is there equality $\beta \mathbb N\setminus\mathbb N= \bigcup\limits_{P\in{\mathcal P}}P$, where ${\mathcal
P}$ is the system of all closed nowhere dense $p$-sets in $\beta \mathbb N\setminus\mathbb N$?
\end{question}

\section{Some property of $C_p(\beta\mathbb N\setminus\mathbb N, \{0,1\})$}

Let $X$ be topological space and $(A_n)_{n=1}^{\infty}$ be a sequence of sets $A_n\subseteq X$. We say that {\it the sequence $(A_n)_{n=1}^{\infty}$ weakly converges to 
$x_0\in X$ in $X$}, if for every neighborhood $U$ of $x_0$ in $X$ there exists an integer $n_0\in\mathbb N$ such that $U\bigcap A_n\ne\O$ for every $n\geq n_0$.

\begin{theorem} \label{th:4.1} Let $Y=\beta \mathbb N\setminus \mathbb N$ and $X=C_p(Y,\{0,1\})$. Then the following conditions are equivalent:

$(i)$ $X$ is meagre;

$(ii)$ $X$ is not Baire;

$(iii)$ there exists a sequence $(E_n)^{\infty}_{n=1}$ of finite pairwise disjoint sets $E_n\subseteq Y$ which weakly converges to a point $y_0\in Y$;

$(iv)$ there exists a sequence $(E_n)_{n=1}^\infty$ of finite pairwise disjoint sets $E_n\subseteq Y$ which weakly converges to every point $y\in \bigcup\limits_{n=1}^\infty E_n$.
\end{theorem}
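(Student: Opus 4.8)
The plan is to prove the four conditions equivalent through the cycle $(i)\Rightarrow(ii)\Rightarrow(iv)\Rightarrow(iii)\Rightarrow(i)$, doing the real work only in the steps $(ii)\Rightarrow(iv)$ and $(iii)\Rightarrow(i)$. The implications $(i)\Rightarrow(ii)$ and $(iv)\Rightarrow(iii)$ are immediate: a nonempty meagre space is never Baire, and if a sequence of finite pairwise disjoint sets weakly converges to every point of the nonempty union $\bigcup_n E_n\subseteq Y$, then it in particular weakly converges to one point $y_0\in Y$. Throughout I will use that a function $x\in X$ is continuous, hence locally constant on $Y$, and that for finitely many distinct points of $Y=\beta\mathbb N\setminus\mathbb N$ together with a prescribed $\{0,1\}$-pattern there is a continuous $x\in X$ realizing it (zero-dimensionality of $Y$); consequently the sets $[E,s]=\{x\in X:x|_E=s\}$ for finite $E\subseteq Y$ form a clopen base of $X$.

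For $(iii)\Rightarrow(i)$, fix the sequence $(E_n)$ weakly converging to $y_0$ and split $X=X_0\cup X_1$ into the clopen pieces $X_i=\{x:x(y_0)=i\}$. Inside $X_i$ put $M_k^i=X_i\cap\bigcap_{n\ge k}\{x:\exists z\in E_n,\ x(z)=i\}$. Each set $\{x:\exists z\in E_n,\ x(z)=i\}$ is a finite union of members of the clopen base, hence clopen, so $M_k^i$ is closed. It is nowhere dense: given a nonempty $[E,s]$ meeting $X_i$, disjointness of the $E_n$ forces $E_n\cap E=\emptyset$ for all but finitely many $n$, so for some $n\ge k$ I may build $x\in[E,s]\cap X_i$ with $x\equiv 1-i$ on $E_n$, whence $[E,s]\not\subseteq M_k^i$. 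Finally $X_i=\bigcup_k M_k^i$: if $x(y_0)=i$ then $x$ is constantly $i$ on a clopen neighbourhood $U$ of $y_0$, and weak convergence yields $n_0$ with $E_n\cap U\ne\emptyset$ for $n\ge n_0$, so $x\in M_{n_0}^i$. Thus $X=\bigcup_{i,k}M_k^i$ is a countable union of closed nowhere dense sets and $X$ is meagre.

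The heart of the theorem is $(ii)\Rightarrow(iv)$. Since $X$ is not Baire, by $[6]$ the player $\beta$ has a winning strategy in the Choquet game on $X$, so every play respecting it has empty intersection. I will play $\alpha$ against this strategy, always refining $\beta$'s open moves to basic clopen sets $[E^{(n)},s^{(n)}]$ with $E^{(n)}$ increasing and the patterns coherent, and I will record the finite blocks $E_n$ of points newly committed at stage $n$, kept pairwise disjoint. The limiting partial function $s^{(\infty)}$ lives on the countable set $D=\bigcup_n E^{(n)}$, and $\bigcap_n[E^{(n)},s^{(n)}]\ne\emptyset$ exactly when $s^{(\infty)}$ extends to a member of $X$; by zero-dimensionality of $Y$ this happens iff the closures of $D_0=(s^{(\infty)})^{-1}(0)$ and $D_1=(s^{(\infty)})^{-1}(1)$ are disjoint. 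As $\beta$'s strategy forces the intersection empty, these closures must meet; since $D_0$ and $D_1$ are countable unions of finite (hence closed) sets, Lemma \ref{l:3.1} shows this is possible only if some committed point already lies in the closure of the oppositely labelled blocks. Reading this collision back through Corollary \ref{c:3.2}, which identifies the closure of a countable subset of $Y$ with its Stone--Cech compactification, produces a point $y_0$ that is a limit of the recorded blocks.

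The main obstacle, and the step demanding the most care, is to pass from such a collision -- a point $y_0$ merely in the \emph{closure} of $\bigcup_n E_n$, i.e. a cluster point meeting infinitely many blocks -- to genuine weak convergence, where every neighbourhood of $y_0$ must meet \emph{cofinitely} many $E_n$, and moreover weak convergence to \emph{every} point of $\bigcup_n E_n$ as demanded by $(iv)$. Because $Y=\beta\mathbb N\setminus\mathbb N$ is not first countable and carries no nontrivial convergent sequences, this liminf-type conclusion cannot be harvested from a single cluster point; it must instead be engineered inside the game. I therefore expect to play $\alpha$ so as to maintain, at each finite stage, a clopen separation of the two label classes and to diagonalize over a bookkeeping of candidate neighbourhoods, so that the only way $\beta$ can still defeat separation in the limit is by driving the blocks to accumulate cofinitely at a point; a final symmetrization of the construction then upgrades convergence to one point into convergence to all points of the union. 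Verifying that this scheme is compatible with an arbitrary winning strategy of $\beta$, and that the disjoint blocks $E_n$ can always be kept finite, is the technical core, and it is precisely here that Lemma \ref{l:3.1} and Corollary \ref{c:3.2} do the separating work.
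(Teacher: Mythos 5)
Your implications $(i)\Rightarrow(ii)$ and $(iv)\Rightarrow(iii)$ are fine, and your $(iii)\Rightarrow(i)$ is correct and is essentially the complement-side of the paper's argument: where you decompose $X=\bigcup_{i,k}M^i_k$ into closed nowhere dense pieces according to the value $x(y_0)=i$, the paper instead shows that the open sets $G_n=\bigcup_{k\ge n}U(E_k,E_{k+1})$ are dense and have empty intersection, because any $x_0$ in the intersection would have oscillation $1$ at $y_0$. (Two small repairs to your version: you must choose $n\ge k$ with $E_n$ disjoint from $E\cup\{y_0\}$, since $y_0$ may lie in one of the blocks or in $E$; and you must discard the at most finitely many empty $E_n$, which weak convergence permits, as otherwise $X_i=\bigcup_k M^i_k$ fails.)

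The proof breaks at $(ii)\Rightarrow(iv)$, and you flag the break yourself (``I therefore expect to play\ldots'', ``the technical core''): you never exhibit $\alpha$'s play against $\beta$'s winning strategy, nor the announced ``bookkeeping of candidate neighbourhoods'', nor the ``final symmetrization''. The contrapositive of Lemma \ref{l:3.1} only yields a point of one label class lying in the closure of the union of the oppositely labelled blocks, i.e.\ a cluster point whose every neighbourhood meets \emph{at least one} of those blocks --- not cofinitely many; upgrading this to the liminf condition of weak convergence, and then to convergence at \emph{every} point of the union, is exactly the content of the theorem, and your sketch supplies no mechanism for it. Note that the paper avoids the game machinery entirely and does not aim at $(iv)$ directly: from $(ii)$ it takes a meagre basic open set $U(A_0,B_0)=\bigcup_n X_n$ with $X_n$ nowhere dense increasing, proves an auxiliary lemma (for every $n$ and every finite $C$ there are finite disjoint $A,B\subseteq Y\setminus C$ with $U(A,B)\cap X_n=\emptyset$; this needs a recursion over the $2^m$ cells of $U(A_0,B_0)$ determined by all $0$--$1$ patterns on $C\setminus(A_0\cup B_0)$, not a single application of nowhere density), and then derives $(iii)$ by contradiction: if no point of $Y$ were a weak limit of the resulting blocks $E_n=A_n\cup B_n$, a nested extraction of infinite index sets separates each chosen block from the closure of all later ones, Lemma \ref{l:3.1} then gives $\overline{A}\cap\overline{B}=\emptyset$ for the two label classes, and a separating function $x_0\in U(A,B)$ lies both in $U(A_0,B_0)$ and in $\bigcap_k(X\setminus X_{n_k})=X\setminus X_0$, a contradiction. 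The remaining step $(iii)\Rightarrow(iv)$ is then a separate combinatorial extraction using Corollary \ref{c:3.2}: the exceptional sub-blocks form a discrete set, and a splitting $M=M_1\sqcup M_2$ of index sets rules out failure of convergence. None of these ingredients appear in your proposal, so the cycle $(i)\Rightarrow(ii)\Rightarrow(iv)\Rightarrow(iii)\Rightarrow(i)$ is broken at its essential link.
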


\begin{proof} For every disjoint sets $A,B\subseteq Y$ we put 
$$U(A,B)=\{x\in X:x(a)=0\,\,\forall a\in A, x(b)=1\,\,\forall b\in B\}.$$
Clearly that the system 
$$\{U(A,B): A,B\subseteq Y \mbox{are\,\,finite\,\,and\,\,disjoint}\}$$ forming a base of the topology of $X$.

The implications $(i)\Rightarrow (ii)$ and $(iv)\Rightarrow (iii)$ are obvious.

$(ii)\Rightarrow (iii)$. Let $A_0,B_0\subseteq Y$ are finite disjoint sets such that $X_0=U(A_0,B_0)$ is meagre in $X$, that is $X_0=\bigcup\limits_{n=1}^\infty X_n$, where $(X_n)_{n=1}^{\infty}$ is a increasing sequence of nowhere dense in $X$ sets.

\begin{quotation} 
\begin{lemma}\label{l:4.2} For every $n\in\mathbb N$ and finite set $C\subseteq Y$ there exist finite disjoint sets $A,B\subseteq Y\setminus C$ such that $U(A,B)\cap X_n=\O$.
\end{lemma}

\begin{proof} Let $D=C\setminus (A_0\cup B_0)=\{d_1,\dots,d_m\}$, moreover without loss of generality we can propose that $m\ge 1$. Let $D_1,\dots,D_{2^m}$ are all subsets of set $D$. We put $C_k=D\setminus D_k$ for $k=1,\dots,2^m$.

Show that $X_0=\bigcup\limits_{k=1}^{2^m}U(A_0\cup C_k,B_0\cup D_k)$. Since $U(A_0\cup C_k,B_0\cup D_k)\subseteq X_0$ for every $k=1,\dots,2^m$, $\bigcup\limits_{k=1}^{2^m}U(A_0\cup C_k,B_0\cup D_k)\subseteq X_0$.

Let $x\in X_0$. Using $k\in\{1,\dots,2^m\}$ such that $C_k=\{y\in D: x(y)=0\}$ ³ $D_k=\{y\in D:x(y)=1\}$ we obtain that $x\in U(A_0\cup C_k,B_0\cup D_k)$.

Since $X_n$ is meagre in $X$, there exist finite disjoint sets $S_1,T_1\subseteq Y\setminus (A_0\cup B_0\cup D)$ such that $U(A_0\cup C_1\cup S_1,B_0\cup D_1\cup T_1)\cap X_n=\O$. Further, using the fact that $X_n$ is meagre in $X$ by the induction on $k$ we construct sequences $(S_k)_{k=1}^{2^m}$ and $(T_k)_{k=1}^{2^m}$ of pairwise disjoint sets $S_k,T_k\subseteq Y$ such that $\left(S_k\cup T_k\right)\cap \left(\bigcup\limits_{i=1}^{k-1}(S_i\cup T_i)\cup A_0\cup B_0\cup D\right)=\O$ and $U\left(\bigcup\limits_{i=1}^kS_i\cup A_0\cup C_k,\bigcup\limits_{i=1}^kT_i\cup B_0\cup D_k\right)\cap X_n=\O$ for every $k\in\{1,\dots,2^m\}$.

We put $A=\bigcup\limits_{k=1}^{2^m}S_k$ and $B=\bigcup\limits_{k=1}^{2^m} T_k$. Show that $U(A,B)\cap X_n=\O$. Assume that $x\in U(A,B)\cap X_n$. Since $X_n\subseteq X_0$, there exists $k\in\{1,\dots,2^m\}$ such that $x\in U(A_0\cup C_k,B_0\cup D_k)$. Then $x\in U(A\cup A_0\cup C_k,B\cup B_0\cup D_k)\cap X_n\subseteq U\left(\bigcup\limits_{i=1}^kS_i\cup A_0\cup C_k,\bigcup\limits_{i=1}^kT_i\cup B_0\cup D_k\right)\cap X_n$. But this contradicts to the choice of sets $S_k$ and
$T_k$.
\end{proof}
\end{quotation}

It follows from Lemma \ref{l:4.2} that there exist sequences $(A_n)_{n=1}^\infty$ and $(B_n)_{n=1}^\infty$ of finite disjoint sets $A_n,B_n\subseteq Y$ such that $(A_n\cup B_n)\cap \left(\bigcup\limits_{k=0}^{n-1} (A_k\cup B_k)\right)=\O$ and $U(A_n,B_n)\cap X_n=\O$ for every $n\in \mathbb N$.

Suppose that $(iii)$ is false. We consider the sequence $(E_n)_{n=1}^\infty$ of pairwise disjoint sets $E_n=A_n\cup B_n$. Using the denial of $(³³³)$ and the finiteness of $E_0=A_0\cup B_0$ we found a finite set $N_1\subseteq {\mathbb N}$ such that $E_0\cap\overline{\bigcup\limits_{n\in N_1}E_n}=\O$.

Using similar reasoning with respect to the set $E_{n_1}$, where $n_1=\min N_1$, the sequence $(E_n)_{n\in N_1}$, we choose an infinite set $N_2\subseteq N_1$ such that  $E_{n_1}\cap \overline{\bigcup\limits_{n\in N_2}E_n}=\O$. Continuing this process to infinity we obtain a strictly decreasing sequence $(N_k)_{k=1}^\infty$ of infinite sets $N_k\subseteq {\mathbb N}$ such that 
$$
E_{n_{k-1}}\cap \overline{\bigcup\limits_{n\in N_k}E_n}=\O,
$$
for every $k\in\mathbb N$, where $n_k=\min N_k$ and $n_0=0$.

Put $\tilde{A}_k=A_{n_{k-1}}$, $\tilde{B}_k=B_{n_{k-1}}$ for every $k\in\mathbb N$, $A=\bigcup\limits_{k=1}^\infty \tilde{A}_k$ and $B=\bigcup\limits_{k=1}^\infty \tilde{B}_k$. According to the choice of $(n_k)^{\infty}_{k=1}$ we have $E_{n_k}\bigcap\left(\overline{\bigcup\limits_{i\ne k}E_{n_i}}\right)=\O$ for every $k\in\mathbb N$. Therefore $\tilde{A_k}\cap\overline{B}=\overline{A}\cap \tilde{B_k}=\O$ for every $k\in\mathbb N$ and $\overline{A}\cap \overline{B}=\O$ according to Lemma \ref{l:3.1}. Hence, $U(A,B)\ne\O$, that is there exists $x_0\in U(A,B)$. Now since $A_0\subseteq A$ and $B_0\subseteq B$, $x_0\in U(A_0,B_0)=X_0$. On other hand, using that $A_{n_k}\subseteq A$ and $B_{n_k}\subseteq B$ for every $k\in\mathbb N$, we obtain that 
$$
x_0\in \bigcap\limits_{k=1}^\infty U(A_{n_k},B_{n_k})\subseteq \bigcap\limits_{k=1}^\infty (X\setminus X_{n_k})=X\setminus
\left(\bigcup\limits_{k=1}^\infty X_{n_k}\right)=X\setminus X_0.
$$
This gives a contradiction

$(iii)\Rightarrow(iv)$. Let $(E_n)_{n=1}^\infty$ be a sequence of finite pairwise disjoint sets $E_n\subseteq Y$, which weakly converges to $y_0\in Y$. Let $$E=\bigcup\limits_{n=1}^\infty E_n=\{y_n:n\in {\mathbb N}\}.$$ 
Using the induction on $k$ it easy to construct a strictly decreasing sequence of infinite sets $N_k\subseteq {\mathbb N}$ such that for every $k\in\mathbb N$
at least one of the following conditions

($a$)\,\, $y_k\not\in \overline{\bigcup\limits_{n\in N_k}E_n}$;

($b$)\,\,the sequence $(E_n)_{n\in N_k}$ weakly converges to $y_k$;

\noindent holds.

We take a strictly increasing sequence $(n_k)_{k=1}^{\infty}$ of integers $n_k\in N_k$. For every $k\in\mathbb N$ we put 
$$
A_k=\{y_m\in E_{n_k}: \,\,\mbox{sequence}\,\, (E_n)_{n\in
N_m} \,\,\mbox{weakly}\,\,\mbox{converges\,\,to}\,\, y_m\}.$$ We show that there exists an integer $k_0$ such that $A_k\ne\O$ for every $k\ge k_0$.

Suppose that there exists an infinite set $M\subseteq \mathbb N$ such that $A_k=\O$ for every $k\in M$. This means that the condition ($a$) holds for every $k\in M$ and $y_m\in E_{n_k}$. Using that $n_i\in N_m$ for all $i\ge m$, we obtain that $y_m\not\in \overline{\bigcup\limits_{i\ge m}E_{n_i}}$. Therefore the set $\bigcup\limits_{k\in M}E_{n_k}$ is discrete. Using infinite subsets $M_1$ and $M_2$ of $M$ such that $M=M_1\sqcup M_2$, according to Corollary \ref{c:3.2}, we obtain that 
$$\left(\overline{\bigcup\limits_{k\in M_1}E_{n_k}}\right)\cap\left(\overline{\bigcup\limits_{k\in M_2}E_{n_k}}\right)=\O.$$
But this contradicts to the fact that the sequence $(E_{n_k})_{k\in M}$ weakly converges to $y_0$.

Now we show that the sequence $(A_k)_{k=1}^\infty$ weakly converges to every point $y\in \bigcup\limits_{k=1}^\infty A_k$.

Let $y_m\in \bigcup\limits_{k=1}^\infty A_k$. Suppose that $(A_k)_{k=1}^{\infty}$ does not weakly converge to $y_m$. Then there exists an infinite set set  $M\subseteq\mathbb N$ such that $y_m\not\in \overline{\bigcup\limits_{k\in M}A_k}$. Without loss of the generality we can propose that $\{n_k:k\in M\}\subseteq N_{m}$.
Note that as in the previous reasoning the set $\bigcup\limits_{k\in M}(E_{n_k}\setminus A_k)$ is discrete. Therefore, using Corollary \ref{c:3.2} we obtain that there exists an infinite set $M_1\subseteq M$ such that $y_m\not\in\overline{\bigcup\limits_{k\in M_1}(E_{n_k}\setminus A_k)}$. Thus, $y_m\not\in \overline{\bigcup\limits_{k\in M_1}E_{n_k}}$. This contradicts to the fact that the sequence $(E_{n_k})_{k\in M_1}$ weakly converges to $y_m$.

$(iii)\Rightarrow(i)$. Let a sequence $(E_n)_{n=1}^{\infty}$ of nonempty finite pairwise disjoint sets weakly converges to a point $y_0\in Y$. For every $n\in\mathbb N$
we put $G_n=\bigcup\limits_{k\ge n}U(E_k,E_{k+1})$. It easy to see that all sets $G_n$ are open and everywhere dense in $X$. Therefore the sets $F_n=X\setminus G_n$ are nowhere dense in $X$. Now it is sufficient to prove that $\bigcap\limits_{n=1}^\infty G_n=\O$.

Assume that $x_0\in\bigcap\limits_{n=1}^\infty G_n$. Then there exists a strictly increasing sequence $(k_n)_{n=1}^\infty$ of integers $k_n\in\mathbb N$ such that $x_0\in U(E_{k_n},E_{k_n+1})$ for every $n\in\mathbb N$, that is $x_0(y)=0$ for every $y\in \bigcup\limits_{n=1}^\infty E_{k_n}$ and $x_0(y)=1$ for every $y\in \bigcup\limits_{n=1}^\infty E_{k_n+1}$. Since $(E_n)_{n=1}^{\infty}$ weakly converges to $y_0$ in $Y$, the oscillation of the function $x_0$ on each neighborhood $V$ of $y_0$ equals to 1. But this contradicts to the continuity of $x_0$ at $y_0$.
\end{proof}

\bibliographystyle{amsplain}

\end{document}